\documentclass[11pt,leqno]{article}
\usepackage[margin=1in]{geometry} 
\usepackage{amssymb,amsfonts,amsmath,bbm,mathrsfs,stmaryrd,mathtools}
\usepackage{xcolor}
\usepackage{url}

\usepackage{graphicx}

\usepackage{accents}

\usepackage{extarrows}

\usepackage[shortlabels]{enumitem}
\usepackage{tensor}

\usepackage{xr}
\usepackage[T1]{fontenc}
\usepackage[utf8]{inputenc}

\usepackage[colorlinks,
linkcolor=black!75!red,
citecolor=blue,
pdftitle={},
pdfproducer={pdfLaTeX},
pdfpagemode=UseNone,
bookmarksopen=true,
bookmarksnumbered=true,
backref=page]{hyperref}

\usepackage{tikz}
\usetikzlibrary{arrows,calc,decorations.pathreplacing,decorations.markings,decorations.shapes,intersections,shapes.geometric,through,fit,shapes.symbols,positioning,decorations.pathmorphing}

\makeatletter
\newlength\zig@L
\newlength\zig@La
\newlength\zig@Lb

\newcommand{\xzigrightarrow}[2][]{%
  \mathrel{%
    \settowidth{\zig@La}{$\scriptstyle #2$}%
    \settowidth{\zig@Lb}{$\scriptstyle #1$}%
    \zig@L=\zig@La\relax
    \ifdim\zig@Lb>\zig@L \zig@L=\zig@Lb\fi
    \advance\zig@L by 2.2em\relax
    \tikz[baseline=-0.65ex]{%
      \draw[->,
            line cap=round,
            decorate,
            decoration={zigzag,segment length=4pt,amplitude=1.1pt}]%
        (0,0) -- (\zig@L,0)
        node[midway,above=2pt] {$\scriptstyle #2$}%
        \if\relax\detokenize{#1}\relax\else
          node[midway,below=2pt] {$\scriptstyle #1$}%
        \fi
      ;
    }%
  }%
}
\makeatother

\makeatletter
\newcommand{\squigjoin}{1mu} 

\def\sqleft@{\sim}                    
\def\sqmid@{\sim\mkern-\squigjoin}    

\def\rightsquigarrowfill@{%
  \arrowfill@{\sqleft@}{\sqmid@}{\mkern-4mu\succ}%
}

\newcommand{\xrightsquigarrow}[2][]{%
  \ext@arrow 0359\rightsquigarrowfill@{#1}{#2}%
}
\makeatother

\makeatletter
\newcommand*\circled[1]{\tikz[baseline=(char.base)]{
    \node[shape=circle, draw, inner sep=0pt, 
    minimum height={\f@size},] (char) {\vphantom{WAH1g}#1};}}
\makeatother

\makeatletter
\DeclareRobustCommand\widecheck[1]{{\mathpalette\@widecheck{#1}}}
\def\@widecheck#1#2{%
    \setbox\z@\hbox{\m@th$#1#2$}%
    \setbox\tw@\hbox{\m@th$#1%
       \widehat{%
          \vrule\@width\z@\@height\ht\z@
          \vrule\@height\z@\@width\wd\z@}$}%
    \dp\tw@-\ht\z@
    \@tempdima\ht\z@ \advance\@tempdima2\ht\tw@ \divide\@tempdima\thr@@
    \setbox\tw@\hbox{%
       \raise\@tempdima\hbox{\scalebox{1}[-1]{\lower\@tempdima\box
\tw@}}}%
    {\ooalign{\box\tw@ \cr \box\z@}}}
\makeatother

\usepackage{braket}

\usepackage[amsmath,thmmarks,hyperref]{ntheorem}
\usepackage{cleveref}

\newcommand\nthalias[1]{\AddToHook{env/#1/begin}{\crefalias{lemma}{#1}}}

\nthalias{assumption}
\nthalias{conjecture}
\nthalias{construction}
\nthalias{convention}
\nthalias{corollaryN}
\nthalias{corollary}
\nthalias{definition}
\nthalias{examples}
\nthalias{example}
\nthalias{lemma}
\nthalias{notation}
\nthalias{propositionN}
\nthalias{proposition}
\nthalias{question}
\nthalias{recollection}
\nthalias{remarks}
\nthalias{remark}
\nthalias{sketch}
\nthalias{theoremN}
\nthalias{theorem}

\creflabelformat{enumi}{#2#1#3}

\crefname{section}{Section}{Sections}
\crefformat{section}{#2Section~#1#3} 
\Crefformat{section}{#2Section~#1#3} 

\crefname{subsection}{\S}{\S\S}
\AtBeginDocument{%
  \crefformat{subsection}{#2\S#1#3}%
  \Crefformat{subsection}{#2\S#1#3}%
}

\crefname{subsubsection}{\S}{\S\S}
\AtBeginDocument{%
  \crefformat{subsubsection}{#2\S#1#3}%
  \Crefformat{subsubsection}{#2\S#1#3}%
}

\theoremstyle{plain}

\newtheorem{lemma}{Lemma}[section]
\newtheorem{proposition}[lemma]{Proposition}

\newtheorem{theorem}[lemma]{Theorem}

\theoremstyle{plain}
\theoremnumbering{Alph}
\newtheorem{theoremN}{Theorem}

\theoremstyle{plain}
\theorembodyfont{\upshape}
\theoremsymbol{\ensuremath{\blacklozenge}}

\newtheorem{definition}[lemma]{Definition}

\newtheorem{remark}[lemma]{Remark}

\crefname{definition}{definition}{definitions}
\crefformat{definition}{#2definition~#1#3} 
\Crefformat{definition}{#2Definition~#1#3} 

\crefname{ex}{example}{examples}
\crefformat{example}{#2example~#1#3} 
\Crefformat{example}{#2Example~#1#3} 

\crefname{exs}{example}{examples}
\crefformat{examples}{#2example~#1#3} 
\Crefformat{examples}{#2Example~#1#3} 

\crefname{remark}{remark}{remarks}
\crefformat{remark}{#2remark~#1#3} 
\Crefformat{remark}{#2Remark~#1#3} 

\crefname{remarks}{remark}{remarks}
\crefformat{remarks}{#2remark~#1#3} 
\Crefformat{remarks}{#2Remark~#1#3} 

\crefname{convention}{convention}{conventions}
\crefformat{convention}{#2convention~#1#3} 
\Crefformat{convention}{#2Convention~#1#3} 

\crefname{notation}{notation}{notations}
\crefformat{notation}{#2notation~#1#3} 
\Crefformat{notation}{#2Notation~#1#3} 

\crefname{table}{table}{tables}
\crefformat{table}{#2table~#1#3} 
\Crefformat{table}{#2Table~#1#3}

\crefname{lemma}{lemma}{lemmas}
\crefformat{lemma}{#2lemma~#1#3} 
\Crefformat{lemma}{#2Lemma~#1#3} 

\crefname{proposition}{proposition}{propositions}
\crefformat{proposition}{#2proposition~#1#3} 
\Crefformat{proposition}{#2Proposition~#1#3} 

\crefname{propositionN}{proposition}{propositions}
\crefformat{propositionN}{#2proposition~#1#3} 
\Crefformat{propositionN}{#2Proposition~#1#3} 

\crefname{corollary}{corollary}{corollaries}
\crefformat{corollary}{#2corollary~#1#3} 
\Crefformat{corollary}{#2Corollary~#1#3} 

\crefname{corollaryN}{corollary}{corollaries}
\crefformat{corollaryN}{#2corollary~#1#3} 
\Crefformat{corollaryN}{#2Corollary~#1#3} 

\crefname{theorem}{theorem}{theorems}
\crefformat{theorem}{#2theorem~#1#3} 
\Crefformat{theorem}{#2Theorem~#1#3} 

\crefname{theoremN}{theorem}{theorems}
\crefformat{theoremN}{#2theorem~#1#3} 
\Crefformat{theoremN}{#2Theorem~#1#3} 

\crefname{enumi}{}{}
\crefformat{enumi}{#2#1#3}
\Crefformat{enumi}{#2#1#3}

\crefname{assumption}{assumption}{Assumptions}
\crefformat{assumption}{#2assumption~#1#3} 
\Crefformat{assumption}{#2Assumption~#1#3} 

\crefname{construction}{construction}{Constructions}
\crefformat{construction}{#2construction~#1#3} 
\Crefformat{construction}{#2Construction~#1#3} 

\crefname{sketch}{sketch}{Sketches}
\crefformat{sketch}{#2sketch~#1#3} 
\Crefformat{sketch}{#2Sketch~#1#3} 

\crefname{recollection}{recollection}{Recollections}
\crefformat{recollection}{#2recollection~#1#3} 
\Crefformat{recollection}{#2Recollection~#1#3} 

\crefname{question}{question}{Questions}
\crefformat{question}{#2question~#1#3} 
\Crefformat{question}{#2Question~#1#3} 

\crefname{equation}{}{}
\crefformat{equation}{(#2#1#3)} 
\Crefformat{equation}{(#2#1#3)}

\numberwithin{equation}{section}

\theoremstyle{nonumberplain}
\theoremsymbol{\ensuremath{\blacksquare}}

\newtheorem{proof}{Proof}

\newcommand\bA{{\mathbb A}}

\newcommand\bG{{\mathbb G}}
\newcommand\bH{{\mathbb H}}

\newcommand\bK{{\mathbb K}}

\newcommand\bP{{\mathbb P}}

\newcommand\bR{{\mathbb R}}
\newcommand\bS{{\mathbb S}}
\newcommand\bT{{\mathbb T}}
\newcommand\bU{{\mathbb U}}

\newcommand\bZ{{\mathbb Z}}

\newcommand\cC{{\mathcal C}}

\newcommand\cE{{\mathcal E}}
\newcommand\cF{{\mathcal F}}

\newcommand\ft{{\mathfrak t}}

\DeclareMathOperator{\ind}{\mathrm{ind}}

\newcommand{\cat}[1]{\textsc{#1}}

\newcommand{\comment}[1]{}

\newcommand{\xrightarrowdbl}[2][]{%
  \xrightarrow[#1]{#2}\mathrel{\mkern-14mu}\rightarrow
}

\title{Action principality as a Lie-group certificate}
\author{Alexandru Chirvasitu}

\begin{document}

\date{}

\newcommand{\Addresses}{{
  \bigskip
  \footnotesize

  \textsc{Department of Mathematics, University at Buffalo}
  \par\nopagebreak
  \textsc{Buffalo, NY 14260-2900, USA}  
  \par\nopagebreak
  \textit{E-mail address}: \texttt{achirvas@buffalo.edu}

}}

\maketitle

\begin{abstract}  
  A continuous action $\mathbb{G}\circlearrowright X$ of a topological group is principal if its isotropy groups are all conjugate to $\mathbb{H}\le \mathbb{G}$ and the quotient map $X\to X/\mathbb{G}$ is a locally trivial $\mathbb{G}/\mathbb{H}$-fiber bundle. We prove that compact groups whose identity component has metrizable abelianization are Lie provided their free actions on Tychonoff (equivalently, compact Hausdorff) spaces are all principal; this is a converse to Gleason's theorem. A variant confirms the conclusion for Tychonoff or compact Hausdorff actions with constant central isotropy by compact connected groups. 
\end{abstract}

\noindent \emph{Key words:
  Pontryagin dual;
  Tychonoff space;
  isotropy group;
  locally trivial bundle;
  orbit type;
  principal action;
  pro-torus;
  tube
}

\vspace{.5cm}

\noindent{MSC 2020: 55R10; 22C05; 54H15; 22D05; 22B05; 55R15; 55R35; 22E15
  

}


\section*{Introduction}

A familiar aspect of compact-Lie-group dynamics is the manageable local orbit/isotropy behavior, manifest in several forms in the literature:
\begin{enumerate}[(a),wide]
\item There is the celebrated Gleason theorem (\cite[Theorem 3.6]{MR33830}, \cite[Theorem 10.34]{hm5} for locally compact spaces, \cite[Theorem II.5.8]{bred_cpct-transf}, etc.) to the effect that actions $\bG\circlearrowright X$ by compact Lie groups on \emph{Tychonoff} (i.e. $T_{3\frac 12}$) spaces of constant \emph{orbit type} $(\bH)$ for $\bH\le \bG$ (or: with all isotropy groups $\bG_x$ $\bG$-conjugate to $\bH\le \bG$) yield locally trivial $\bG/\bH$-fiber bundles $X\to X/\bG$. 

\item\label{item:intro.tubes} Intimately related to this, $T_{3\frac 12}$-actions $\bG\circlearrowright X$ all admit \emph{tubes} (\cite[Theorem II.5.4]{bred_cpct-transf}, \cite[Theorem I.5.7]{td_transf-gp}) surrounding arbitrary orbits $\bG x$, $x\in X$: open $\bG$-invariant neighborhoods of the form $\bG\times_{\bG_x}S$ for a $\bG_x$-space $S$ (a \emph{slice} of the action).

\item Or again: given an action $\bU\circlearrowright \bG$ of one compact Lie group on another and a $\bU$-equivariant \emph{principal $\bG$-bundle} \cite[\S I.8]{td_transf-gp} $\cE\xrightarrowdbl{} X$ over a $T_{3\frac 12}$ space assumed locally trivial as a plain $\bG$-bundle, $\bU$-equivariant local triviality is automatic \cite[Proposition I.8.10]{td_transf-gp}. 
\end{enumerate}

The broad theme underpinning the present note is the extent to which such properties and analogues thereof \emph{characterize} Lie groups among compact groups. A case in point is \Cref{item:intro.tubes}: the compact groups admitting tubes around all of their orbits for arbitrary actions on arbitrary Tychonoff (or equivalently, compact Hausdorff) spaces must be Lie \cite[Theorem 2.1]{2402.08121v2}. 

The results below are very much in the same circle of ideas, but focusing on actions $\bG\circlearrowright X$ with ``controlled'' isotropy: the stabilizer (or isotropy) groups
\begin{equation*}
  \bG_x
  :=
  \left\{g\in \bG\ :\ gx=x\right\}
\end{equation*}
are required to be central, say, or trivial (the latter meaning the action would be free). A slightly paraphrased amalgam of \Cref{th:mtrz.ab,th:lie.iff.cntrl.princ} reads as follows.

\begin{theoremN}\label{thn:intro.cntrl.free}
  Let $\bG$ be a compact group and $\bG_0\le \bG$ its identity connected component.
  \begin{enumerate}[(1)]
  \item If $\bG$ has metrizable connected abelianization $\bG_0/\bG_0'$ the following conditions are equivalent.
    \begin{enumerate}[(a),wide]
    \item $\bG$ is Lie.
    \item All free $\bG$-actions $\bG\circlearrowright X$ on Tychonoff spaces are \emph{($\bG$-)principal}, meaning that $X\xrightarrowdbl{} X/\bG$ is a locally trivial principal $\bG$-bundle.

    \item All free $\bG$-actions on compact Hausdorff spaces are principal. 
    \end{enumerate}

  \item If $\bG$ is connected the following conditions are equivalent.
    \begin{enumerate}[(a),wide]
    \item $\bG$ is Lie.

    \item All $\bG$-actions $\bG\circlearrowright X$ on Tychonoff spaces with constant central isotropy $\bH\le Z(\bG)\le \bG$ are $\bG/\bH$-principal.

    \item All actions on compact Hausdorff spaces with constant central isotropy are principal.
    \end{enumerate}
  \end{enumerate}
\end{theoremN}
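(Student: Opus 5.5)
Both parts have the same shape. The implications (a)$\Rightarrow$(b) follow from Gleason's theorem: a compact Lie group acting on a Tychonoff space with constant orbit type $(\bH)$ gives a locally trivial $\bG/\bH$-bundle $X\to X/\bG$. Free actions have orbit type $(\{1\})$, and constant central isotropy $\bH$ is in particular a single orbit type. The implications (b)$\Rightarrow$(c) are trivial, since compact Hausdorff spaces are Tychonoff. The content is therefore (c)$\Rightarrow$(a), which I will argue contrapositively. Assuming $\bG$ is not Lie, I will build a compact Hausdorff free (resp. constant-central-isotropy) action that is not principal. The basic test is that local triviality of $X\to X/\bG$ produces local continuous sections. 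So it suffices to find a non-principal action such that the quotient has no continuous local section near some point.

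\textbf{Reduction to pro-tori.} For connected non-Lie $\bG$, the structure theory gives $\bG=(Z_0\times \prod_i S_i)/D$, with $Z_0=Z(\bG)_0$ a pro-torus, $S_i$ simple simply-connected Lie groups, and $D$ central and totally disconnected. Non-Lie means either $Z_0$ is an infinite-dimensional pro-torus or there are infinitely many $S_i$.

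In part (2) I would use central isotropy to reduce to the abelian case. For a closed central $\bH$, an action of $\bG/\bH$ is a $\bG$-action with isotropy $\bH$, and a non-principal free $\bG/\bH$-action is a non-principal $\bG$-action with constant isotropy $\bH$. So I would look for a central $\bH$ with $\bG/\bH$ ``nicely non-Lie''. If $Z_0$ is infinite-dimensional, take $\bH$ to be the semisimple part's image together with a suitable subgroup, leaving an infinite-dimensional torus quotient such as $\bT^{\bZ}$-like pro-tori. If instead there are infinitely many $S_i$, take $\bH=Z(\bG)$, which yields $\prod_i S_i/Z(S_i)$ as the quotient. I then need non-principal free actions for these two model groups.

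In part (1), the metrizability of $\bG_0/\bG_0'$ is presumably what lets the abelian part be handled: a metrizable infinite-dimensional pro-torus has dual a countable torsion-free group of infinite rank. For free actions of a non-Lie $\bG$, I would restrict attention to a closed subgroup $\bK\le\bG$ that is a model non-Lie group, such as $\prod_{\bN}\bT$, a product of infinitely many simple factors, or, if $\bG_0$ is Lie, a profinite $\bG/\bG_0$. I would then induce the action up: $X=\bG\times_{\bK}Y$. Induction preserves freeness, $X/\bG=Y/\bK$, and local sections for $\bG$ would restrict to local sections for $\bK$ after composing with a local section of $\bG\to\bG/\bK$ if one exists. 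That last point is the subtle one, since $\bG\to\bG/\bK$ need not admit local sections, and I expect this is exactly where the metrizability hypothesis enters.

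\textbf{Model counterexamples (main obstacle).} For a profinite or product-type $\bK=\prod_n \bK_n$ with nontrivial $\bK_n$, I would take $Y$ to be the one-point-compactification-style space $\{\text{cone-like}\}$. Concretely, $Y\subset \bK\times[0,1]$ is the closed set of pairs $(k,t)$ with $t\in\{1/n\}\cup\{0\}$, twisted so that the orbit over $1/n$ is a free $\bK$-orbit in which the coordinates beyond $n$ collapse in a way that forbids continuous sections converging at $0$. An alternative is Kadison–Singer/Gleason-style obstructions: a free action with non-locally-trivial quotient, e.g. $\bK$ acting on itself times a space where the sections' limits would have to lie in ever-smaller subgroups. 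The hard step is making the action genuinely free at the limit point while still obstructing sections. For pro-tori I would instead dualize. A principal $\bT^{\bN}$-like bundle over a compact base corresponds to compatible line-bundle data. Non-principality can be forced by taking an infinite-rank torsion-free dual $\Gamma$ and a limit of finite-rank quotients with unbounded ``denominators'', which is exactly where countability of $\Gamma$, i.e. metrizability, makes the inductive construction possible.
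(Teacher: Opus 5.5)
Your outer frame matches the paper's. Gleason gives (a)$\Rightarrow$(b)$\Rightarrow$(c). Free actions of a central quotient $\bG/\bH$ are central-isotropy $\bG$-actions, which is how \Cref{th:lie.iff.cntrl.princ} rules out infinitely many simple factors via $\bG\to\bG/Z(\bG)$. Inducing $\bG\times_{\bK}Y$ from a closed subgroup is \Cref{le:if.loc.triv.emb}. The reductions, however, have three holes.

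First, your dichotomy is wrong. $\bG$ also fails to be Lie when $Z(\bG)_0$ is a finite-dimensional pro-torus that is not a torus (e.g.\ a solenoid), and your plan never reaches this case. The paper handles it in \Cref{pr:lie.iff.cntrl.princ.ab} via a profinite $\bP\le\bG$ with $\bG/\bP\cong\bS^1$, reducing to profinite groups.

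Second, in part (2), ``the semisimple part's image together with a suitable subgroup'' is not central, since a nontrivial connected semisimple subgroup never is. So no central quotient isolates the abelian part. The paper uses the central quotient only to show there are finitely many simple factors. Then $\bG\to\bG/\bA$, with $\bA$ the central pro-torus kernel, has a Lie quotient and is therefore locally trivial by Szenthe's theorem, so \Cref{le:if.loc.triv.emb} transfers counterexamples from $\bA$. The local sections you worry about come from Lie quotients, not from metrizability.

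Third, metrizability enters only in part (1), and in a different way. Principality of the canonical embedding \Cref{eq:can.emb} forces $\bG/\bG_0$ to be finite and $\bG$ locally connected (\Cref{pr:can.emb.fin.comp}). A metrizable, locally connected pro-torus is a torus $\bT^I$ with $I$ countable, and this is how $\bG_0/\bG_0'$ is disposed of. For the semisimple part of part (1) you have neither central quotients (the action must be free) nor an evident product-type subgroup with locally trivial quotient. The paper does not induce there at all.

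The decisive gap is the model counterexamples, which you leave open, and the one you sketch cannot work. Suppose a compact metrizable $\bK$ acts freely on a compact metrizable $Y$ with $Y/\bK$ zero-dimensional, such as $\{0\}\cup\{1/n\}$. Michael's zero-dimensional selection theorem, applied to the lower semicontinuous carrier $b\mapsto q^{-1}(b)$ ($q$ is open), gives a global continuous section. Hence $Y\cong\bK\times Y/\bK$ is trivial, in particular principal.

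Non-principality has to come from the topology of the base, and the paper has three concrete obstructions:
\begin{enumerate}[(i)]
\item For compact $X$, principality gives finitely many tubes, hence an equivariant map $X\to E_n\bG$ and a bound on the index \Cref{eq:g.ind}. This fails for $\prod_i E_{n_i}\bG_i$ with $n_i$ unbounded (\Cref{re:not.all.can.reps}\Cref{item:re:not.all.can.reps:join}), which covers $\bT^{\mathbb{N}}$ and infinite products $\prod_i\bS_i$.
\item For infinite profinite groups, use the canonical embedding into $\prod_\rho\bU(\dim\rho)$: a tube there would make $\bG$ locally connected.
\item For semisimple $\bG$ with central profinite-kernel cover $\pi^*:\prod_i\bS_i^*\to\bG$, use the free action on $\left(\prod_i E_{m_i}\bS_i^*\right)/\ker\pi^*$. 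An equivariant map to $E_n\bG$ would give $\ind_{\bS_{i*}}\left(E_{m_i}\bS_i^*/Z(\bS_i^*)\right)\le n$ uniformly in $i$. For large $m_i$ this is contradicted by the characteristic-class index together with the injectivity of $H^*(B\bS_{i*};\bR)\to H^*(B\bS_i^*;\bR)$.
\end{enumerate}
Your ``dualize with unbounded denominators'' idea for pro-tori is not yet an argument; the paper needs only (i) and (ii) there.
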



\section{Controlled-isotropy/principal coincidence as a Lie characterization}\label{se:fr.princ}

Recall \cite[post Corollary I.4.4]{bred_cpct-transf} that an \emph{orbit type} of an action $\bG\circlearrowright X$ is a conjugacy class $(\bG_x)$ of an isotropy group
\begin{equation*}
  \bG_x:=\left\{g\in \bG\ :\ gx=x\right\}
  ,\quad
  x\in X.
\end{equation*}
A \emph{principal} action $\bG\circlearrowright X$ on a \emph{Tychonoff} (or $T_{3\frac 12}$ \cite[Definition 14.8]{wil_top}) space will be one with constant orbit type $\bH$, with the property that $X\xrightarrowdbl{}X/\bG$ is a locally trivial fibration with fiber $\bG/\bH$. In particular, \emph{free} principal actions produce principal $\bG$-bundles in the usual sense \cite[\S I.8]{td_transf-gp}.

\Cref{th:lie.iff.cntrl.princ} characterizes Lie groups among arbitrary compact connected groups in terms of action principality, very much in the spirit of \cite[Theorem 2.1]{2402.08121v2}.

\begin{remark}\label{re:not.all.can.reps}
  Two distinct mechanisms for the failure of non-Lie compact-group free actions to be principal are visible in \cite[Example 2.2 and Lemma 2.4]{2402.08121v2} respectively:
  \begin{enumerate}[(a),wide]    
  \item\label{item:re:not.all.can.reps:join} In the first instance, an infinite product $\prod_i \bG_i$ of non-trivial compact Lie groups can operate freely on $\prod_i X_i$, with actions $\bG_i\circlearrowright X_i$ of $i$-unbounded \emph{$\bG_i$-index} in the sense of \cite[Definition 6.2.3]{mat_bu} (\emph{co-index} in \cite[\S 3]{cf1}):
    \begin{equation}\label{eq:g.ind}
      \mathrm{ind}_{\bG}\left(X\right)
      =
      \mathrm{ind}_{\bG}\left(\bG\circlearrowright X\right)
      :=
      \inf\left\{n\in \bZ_{\ge 0}\ :\ \exists\ X\xrightarrow[\quad\text{equivariant}\quad]{\quad}E_n\bG:=\bG^{*(n+1)}\right\}
    \end{equation}
    (with ``$*$'' denoting \emph{joins}: \cite[Definition 4.2.1]{mat_bu}, \cite[post Theorem I.6.6]{td_transf-gp}); indeed, one can take $\bG_i\circlearrowright X_i:=\bG_i\circlearrowright E_{n_i}\bG_i$ for unbounded $\left\{n_i\right\}_i\subseteq \bZ_{\ge 0}$.     

  \item\label{item:re:not.all.can.reps:profin} Secondly, profinite groups are finite precisely when their \emph{canonical embeddings}
    \begin{equation}\label{eq:can.emb}
      \bG
      \lhook\joinrel\xrightarrow{\quad\cat{can}=\cat{can}_{\bG}:=\prod_{\rho}\rho\quad}
      \prod_{\rho}\bU(\dim(\rho))
    \end{equation}
    are principal, with $\rho$ ranging over irreducible representations. 
  \end{enumerate}
  It is perhaps worth noting, in context, that these procedures will not \emph{invariably} produce pathological examples: they cannot generally be mixed, as the simple observation in \Cref{le:tor.prod} confirms. 
\end{remark}

\begin{lemma}\label{le:tor.prod}
  For a torus $\bG\cong \bT^I:=\left(\bS^1\right)^I$ (finite-dimensional or not) the canonical embedding $\cat{can}_{\bG}$ of \Cref{eq:can.emb} induces a free translation action. 
\end{lemma}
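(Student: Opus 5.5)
The plan is to show that the translation action $\bG\circlearrowright \prod_\rho \bU(\dim\rho)$ through $\cat{can}_{\bG}$ is not only free (which holds for any embedding) but also principal in the sense of \Cref{se:fr.princ}. In fact I will show it is \emph{globally} trivial, by exhibiting a continuous homomorphic retraction of the ambient group onto $\cat{can}(\bG)$.

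First I identify the target. Since $\bG\cong\bT^I$ is abelian, all irreducible representations $\rho$ are characters. By Pontryagin duality they form the discrete group $\widehat{\bG}\cong\bZ^{(I)}$, the finitely supported integer families. Each $\bU(\dim\rho)$ is therefore $\bS^1$, and the canonical embedding becomes
\begin{equation*}
  \cat{can}:\bT^I\lhook\joinrel\longrightarrow \bT^{\widehat{\bG}},\qquad
  z\longmapsto \left(\chi(z)\right)_{\chi\in\widehat{\bG}}.
\end{equation*}
This is a continuous injective homomorphism of compact groups, hence a closed topological embedding.

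Next I construct the retraction directly. For $i\in I$ let $\pi_i\in\widehat{\bG}$ be the $i$-th coordinate character. Define
\begin{equation*}
  r:\bT^{\widehat{\bG}}\to\bT^I,\qquad (w_\chi)_{\chi}\longmapsto (w_{\pi_i})_{i\in I}.
\end{equation*}
This is a projection onto a subset of coordinates, so it is a continuous homomorphism, and it satisfies $r\circ\cat{can}=\mathrm{id}_{\bT^I}$. Dually, this is just a splitting of the restriction map $\bZ^{(\widehat{\bG})}\to\widehat{\bG}$, which exists because $\widehat{\bG}$ is free abelian.

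It follows that the map
\begin{equation*}
  \bT^I\times\ker r\to\bT^{\widehat{\bG}},\qquad (z,k)\mapsto \cat{can}(z)\,k,
\end{equation*}
is a continuous bijective homomorphism of compact groups. Its inverse is $w\mapsto\left(r(w),\,\cat{can}(r(w))^{-1}w\right)$, which is visibly continuous. Under this isomorphism the translation action of $\bG$ corresponds to left multiplication on the first factor. Hence the quotient map to $\bT^{\widehat{\bG}}/\bG\cong\ker r$ is the trivial principal $\bG$-bundle, and the action is free and principal.

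I do not expect a genuine obstacle. The only point needing care is that the splitting is continuous on the whole infinite product. This is automatic here because $r$ is a coordinate projection, which is precisely where the torus hypothesis enters, through freeness of $\widehat{\bG}$. By contrast, in the profinite situation of \Cref{re:not.all.can.reps}\Cref{item:re:not.all.can.reps:profin} the dual group has torsion and no such splitting exists.
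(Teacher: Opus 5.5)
Your proposal is correct and is essentially the paper's proof: the paper observes that $\bG$ itself, via the coordinate characters, is one of the factors of $\prod_\rho\bU(\dim\rho)$, and trivializes the resulting diagonal action on $\bG\times X$ by the shear map $(g,x)\mapsto(g,gx)$. Your retraction $r$ is the projection onto that factor, and your isomorphism $(z,k)\mapsto\cat{can}(z)\,k$ is exactly this shear, so the two arguments coincide up to phrasing.
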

\begin{proof}
  Simply observe that $\bG$ itself, with its usual translation action, will be a factor in \Cref{eq:can.emb}'s right-hand side. For arbitrary $\bG$-spaces $X$ we have an identification
  \begin{equation*}
    \left(\bG\times X,\ \text{left-translation action}\right)
    \ni
    (g,x)
    \xmapsto{\quad}
    (g,gx)
    \in
    \left(\bG\times X,\ \text{diagonal action}\right),
  \end{equation*}
  hence the conclusion. 
\end{proof}

Some streamlined language will be of use.

\begin{definition}\label{def:prnc.gp}
  Let $\bG$ be a compact group and $\cF=\{(\bH_i)\}_i$ a class of $\bG$-orbit types. \emph{$\cF$-actions} are those with constant orbit type in $\cF$.
  \begin{enumerate}[(1),wide]    
  \item For a class $\cC$ of constant-orbit-type actions on $T_{3\frac 12}$ spaces, $\bG$ is \emph{$\cC$-principal$_{\cF}$} if actions in $\cC$ with constant orbit type in $\cF$ are principal.

  \item The terms apply also to classes of $T_{3\frac 12}$ \emph{spaces} rather than actions: in that case \emph{$\cC$-principal$_{\cF}$} means that $\cF$-actions on spaces $X\in \cC$ all are.

  \item Similarly, \emph{principality$_\cF$} means that $\cF$-actions on all $T_{3\frac 12}$ spaces are principal. $T_{2,\kappa}$ denotes the compact-$T_2$ class.
  \end{enumerate}
  Some families $\cF$ of possible interest would be: $\cat{f}=\{(\{1\})\}$ (standing for ``free''), $\cat{all}$=arbitrary subgroups, $\cat{n}$=normal, $\cat{Ab}$=abelian, $\cat{nAb}$=normal abelian, $\cat{c}$=central. 
\end{definition}

A first remark follows, much as in the resolution of the profinite case (\cite[Lemma 2.4]{2402.08121v2} and \Cref{re:not.all.can.reps}\Cref{item:re:not.all.can.reps:profin}). Recall \cite[Definition post Theorem 25.2]{mnk} that a \emph{locally (path-)connected} space is one whose topology has a basis consisting of (respectively path-)connected open sets. We consistently denote identity connected components of groups $\bG$ by $\bG_0$ (following fairly standard notation: e.g. \cite[p.xi]{hm5}).

\begin{proposition}\label{pr:can.emb.fin.comp}
  If the canonical embedding \Cref{eq:can.emb} of a compact group $\bG$ is principal (so a fortiori if $\bG$ is principal$_{\cat{f}}$) then
  \begin{itemize}[wide]
  \item both the connected-component group $\bG/\bG_0$ and the group $\pi_0(\bG)$ of path components are finite and coincide;
  \item $\bG_0$ is path-connected;
  \item and $\bG$ (along with $\bG_0$) is locally connected and locally path-connected. 
  \end{itemize}
\end{proposition}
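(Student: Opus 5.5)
Let $\bU:=\prod_\rho \bU(\dim\rho)$ be the right-hand side of \Cref{eq:can.emb}. The plan is to exploit the one piece of structure that principality gives: the quotient map
\begin{equation*}
  \pi\colon \bU\xrightarrowdbl{\quad}\bU/\bG
\end{equation*}
is a locally trivial principal $\bG$-bundle. Hence there is an open neighborhood $V$ of the base point $\pi(1)$ and a homeomorphism $\pi^{-1}(V)\cong V\times\bG$ that is $\bG$-equivariant. All four conclusions will then be transferred from properties of $\bU$ to $\bG$ through this local product structure, using the fact that a product of connected, locally connected, path-connected compact Lie groups inherits each of these properties.

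\textbf{Step 1: local connectivity.} The group $\bU$ is a product of connected compact manifolds. Such a product is connected, path-connected and locally path-connected, because basic open sets are products of finitely many path-connected manifold balls with full factors. The set $\pi^{-1}(V)$ is open in $\bU$, so it is locally (path-)connected, and therefore so is $V\times \bG$. Local (path-)connectedness passes to a retract-factor of a product: project a small connected neighborhood of $(v,g)$ to $\bG$, and use that projections are open and preserve (path-)connectedness. So $\bG$ is locally connected and locally path-connected, and $\bG_0$ inherits both properties as an open subgroup.

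\textbf{Step 2: finiteness and path-connectedness.} Since $\bG$ is compact and locally connected, its components are open, so $\bG/\bG_0$ is finite. Likewise, local path-connectedness makes path components open; they therefore coincide with the components, since an open partition of a connected set is trivial. Hence $\bG_0$ is path-connected and $\pi_0(\bG)=\bG/\bG_0$ is finite.

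I expect the only delicate point to be Step 1, namely passing local connectivity from $V\times\bG$ to the factor $\bG$. I would handle it concretely. Take any neighborhood $W$ of $g$, and choose a (path-)connected open set $O\subseteq V\times W$ around $(v,g)$. Its projection to $\bG$ is a (path-)connected set containing $g$ and contained in $W$, and it is open because projections of products are open maps. This gives the required basis at $g$.

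The parenthetical ``a fortiori'' needs one extra observation. The translation action of $\bG$ on $\bU$ (via $\cat{can}$) is free, and $\bU$ is compact Hausdorff. Principality$_{\cat f}$ therefore applies to this action, so the canonical embedding is principal.
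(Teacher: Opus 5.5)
Your proposal is correct and follows essentially the paper's route: you use a local trivialization $\pi^{-1}(V)\cong V\times\bG$ sitting openly inside the locally path-connected product $\bU$, and the openness of the projection to $\bG$, to get local (path-)connectedness of $\bG$; openness of components and path components, and hence their finiteness and coincidence, then follows. The only difference is ordering: the paper first extracts openness of the (path-)components and then proves local (path-)connectedness separately, whereas you derive the component claims from local (path-)connectedness, which is slightly cleaner.
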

\begin{proof}
  The product $\bU$ of unitary groups translated by $\cat{can}(\bG)$ is itself locally path-connected, so principality ensures the existence of a $\bG$-invariant neighborhood $V\ni 1\in \bU$, disjoint union of finitely many (path-)connected open sets $V=\bigsqcup_{1}^n V_i$, admitting a $\bG$-equivariant map onto $\bG$ with $V_i$s' images containing open subsets of $\bG$ which cover the latter. It follows that the components and path components of $\bG$ are open (so coincide), hence the first two claims.

  For the third:
  \begin{itemize}[wide]
  \item map an open neighborhood $\cong \bG\times U\cong V$ of a fixed $\bG$-orbit $\bG x\in \bU$ by
    \begin{equation*}
      \bG\times U
      \xrightarrowdbl[\quad\text{open map}\quad]{\quad \text{first projection }\pi_1\quad}
      \bG;
    \end{equation*}

  \item for open $W\subseteq \bG$ the preimage $\pi_1^{-1}W$ will have open connected (path-)components \cite[Theorems 25.3 and 25.4]{mnk} by local (path-)connectedness;

  \item the connected (path-)components of $W=\pi_1 \pi_1^{-1}W$ are then open, yielding respective local (path-)connectedness for $\bG$ by the converse clauses of the same \cite[Theorems 25.3 and 25.4]{mnk}.
  \end{itemize}
\end{proof}

The following remark will be of some (repeated) use.

\begin{lemma}\label{le:if.loc.triv.emb}
  Let $\bH\le \bG$ be a closed embedding of topological groups with $\bH$ acting principally on $\bG$.
  
  The free action $\bH\circlearrowright Y$ is principal provided the induced action $\bG\circlearrowright X:=\bG\times_{\bH}Y$ is. In particular,
  \begin{equation*}
    \bG\text{ ($T_{2,\kappa}$-)principal$_{\cat{f}}$}
    \xRightarrow{\quad}
    \bH\text{ ($T_{2,\kappa}$-)principal$_{\cat{f}}$}.
  \end{equation*}
\end{lemma}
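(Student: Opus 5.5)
The plan is to use local sections of $X\to X/\bG$ and of $\bG\to\bG/\bH$ to build local sections of $Y\to Y/\bH$. Since principal bundles are locally trivial exactly when they admit local continuous sections over an open cover of the base, this is all we need.

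First, the orbit spaces. The map $Y\to X=\bG\times_{\bH}Y$, $y\mapsto[1,y]$, induces a continuous bijection $Y/\bH\to X/\bG$. Its inverse is also continuous. To see this, note that $\bG\times Y\to Y/\bH$, $(g,y)\mapsto \bH y$, is constant on the $\bG\times\bH$-orbits defining $X/\bG=(\bG\times Y)/(\bG\times\bH)$. So $Y/\bH\cong X/\bG$, and we write $B$ for both.

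Second, $X\to\bG/\bH$. There is a $\bG$-equivariant map
\begin{equation*}
  p\colon X=\bG\times_{\bH}Y\longrightarrow \bG/\bH,
  \qquad [g,y]\mapsto g\bH,
\end{equation*}
and $Y\cong p^{-1}(\bH)$ as $\bH$-spaces via $y\mapsto[1,y]$. This identification should be checked using principality of $\bH\circlearrowright\bG$. Over an open $W\ni\bH$ carrying a local section $\sigma\colon W\to\bG$ with $\sigma(\bH)=1$, one gets $p^{-1}(W)\cong W\times Y$ via $[g,y]\mapsto (g\bH,\ \sigma(g\bH)^{-1}g\,y)$. The point is that $\sigma(g\bH)^{-1}g\in\bH$ depends continuously on $g$.

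Third, the section. Fix $b\in B$. Principality of $\bG\circlearrowright X$ gives an open $U\ni b$ and a section $s\colon U\to X$. Choose $y_0\in Y$ over $b$; by equivariance we may assume $s(b)=[1,y_0]$, so $p(s(b))=\bH$. Shrink $U$ so that $p(s(U))\subseteq W$, and define
\begin{equation*}
  t(u):=\sigma\bigl(p(s(u))\bigr)^{-1}\cdot s(u)\in p^{-1}(\bH)\cong Y .
\end{equation*}
This is continuous, and it is a section of $Y\to Y/\bH$ over $U$ under the identification of the first step. Freeness of $\bH\circlearrowright Y$ together with these local sections yields local triviality $\bH\times U\cong \pi^{-1}(U)$, via $(h,u)\mapsto h\,t(u)$. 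The inverse of this map is continuous because the translation function $\pi^{-1}(U)\to\bH$ factors through the corresponding continuous translation function for $X$.

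For the ``in particular'', take a free $\bH$-action on a Tychonoff (respectively compact Hausdorff) space $Y$. We need $\bH\circlearrowright\bG$ principal, which holds whenever the ambient statement applies, e.g.\ by Gleason when $\bH$ is Lie; in general it is part of the hypothesis. Then $\bG\times_{\bH}Y$ is again Tychonoff (respectively compact Hausdorff), as a quotient of $\bG\times Y$ by a compact group action. It is also free, since stabilizers of $[g,y]$ are $g\bH_yg^{-1}=1$. So the main part applies.

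I expect the main obstacle to be the topological bookkeeping: verifying the homeomorphisms $Y/\bH\cong X/\bG$ and $Y\cong p^{-1}(\bH)$, and the continuity of the translation maps. These are routine for compact groups but need care for general topological groups.
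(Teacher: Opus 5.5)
Your proof is correct and is essentially the paper's argument. The paper works in tube language: it restricts the $\bG$-equivariant map on a neighborhood in $X$ to an $\bH$-equivariant $\varphi$ into $\bG$, then composes with an $\bH$-equivariant $\psi:V\to\bH$ supplied by the principality of $\bH\circlearrowright\bG$; your section $t(u)=\sigma(p(s(u)))^{-1}s(u)$ is just the section-side counterpart of $\psi\circ\varphi$, and you additionally check the point-set details the paper leaves implicit ($Y/\bH\cong X/\bG$, $Y\cong p^{-1}(\bH)$, and continuity of the translation function).
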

\begin{proof}
  The second claim follows formally from the first. 
  
  For a constant-orbit-type-$(\bK)$ action $\bH\circlearrowright Y$ principality can be phrased as the existence of \emph{tubes} (\cite[post Theorem I.5.6]{td_transf-gp}, \cite[\S II.4]{bred_cpct-transf}) about orbits $\bH y$:
  \begin{itemize}[wide]
  \item an open $\bH$-neighborhood $U\supseteq \bH y$ admitting an $\bH$-equivariant map $U\to \bH/\bK$;
  \item equivalently, an open neighborhood $\bH$-equivariantly identifiable with $\bH\times_{\bK}Z$ for some $\bK$-space $Z$. 
  \end{itemize}
  Under the hypothesis, $\bH$-orbits $\bH y$ in $Y$ have $\bH$-neighborhoods $U$ admitting $\bH$-equivariant maps $U\xrightarrow{\varphi} \bG$. The principality of $\bH\circlearrowright \bG$ further ensures the existence of $\bH$-equivariant
  \begin{equation*}
    \varphi(\bH y) \subseteq V
    \xrightarrow{\quad\psi\quad}
    \bH,
  \end{equation*}
  hence also
  \begin{equation*}
    \varphi^{-1}V
    \xrightarrow{\quad\psi\circ\varphi\quad}
    \bH:
  \end{equation*}
  the sought-after equivariant map to $\bH$, locally defined around the original $\bH y$. 
\end{proof}

\begin{remark}\label{re:ind.res.act}
  Cf. the companion result (\cite[Observation 2.4(3)]{2604.02099v1}, in turn citing \cite[Lemma 2.9]{2404.12748v1}), to the effect that the principality of $\bG\circlearrowright X$ and $\bH\le \bG$ entails that of $\bH\circlearrowright X$. Although that proof is not phrased in tube language, the technique is the same in essence.

  In fact, the two results imply one another:
  \begin{enumerate}[label={},wide]
  \item\textbf{(restriction $\Rightarrow$ induction)} \cite[Lemma 2.9]{2404.12748v1} being assumed, under the hypotheses of \Cref{le:if.loc.triv.emb} $\bH\circlearrowright \bG\times_{\bH}Y$ is principal. So too, then, is the restricted action on the $\bH$-invariant subspace $Y\subseteq \bG\times_{\bH}Y$. 

  \item\textbf{(induction $\Rightarrow$ restriction)} Conversely, take \Cref{le:if.loc.triv.emb} for granted and consider the principal action $\bG\circlearrowright X$. The $\bG$-equivariant isomorphism
    \begin{equation*}
      \bG\times_{\bH}X
      \ni
      [g,x]      
      \xmapsto[\quad\cong\quad]{\quad}
      (g\bH,gx)
      \in
      \left(\bG/\bH\times X,\ \text{diagonal action}\right)
    \end{equation*}
    then makes $\bG\circlearrowright \bG\times_{\bH}X$ principal \cite[Observation 2.4(2)]{2604.02099v1}, hence the principality of $\bH\circlearrowright X$. 
  \end{enumerate}
\end{remark}

Before revisiting the matter of free-action principality, there is a central-isotropy counterpart relatively easy to extract from the material amassed thus far.

\begin{theorem}\label{th:lie.iff.cntrl.princ}
  The following conditions on a connected compact group $\bG$ are equivalent.
  \begin{enumerate}[(a),wide]
  \item\label{item:th:lie.iff.cntrl.princ:lie} $\bG$ is Lie.

  \item\label{item:th:lie.iff.cntrl.princ:t312} $\bG$ is principal$_{\cat{c}}$.

  \item\label{item:th:lie.iff.cntrl.princ:cpctt2} $\bG$ is $T_{2,\kappa}$-principal$_{\cat{c}}$.

  \end{enumerate}
\end{theorem}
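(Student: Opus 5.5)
The implications (a)$\Rightarrow$(b)$\Rightarrow$(c) are the easy part. For (a)$\Rightarrow$(b), take a connected compact Lie group $\bG$ acting on a Tychonoff space with constant central isotropy $\bH\le Z(\bG)$. All isotropy groups are then literally equal to $\bH$, so the orbit type is constant. Gleason's theorem (as recalled in the Introduction) then gives local triviality of $X\to X/\bG$ with fiber $\bG/\bH$. The step (b)$\Rightarrow$(c) is simply a restriction of the class of spaces. All the work is in (c)$\Rightarrow$(a), which I would argue by contraposition, or rather by forcing a Lie structure.

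Assume $\bG$ is $T_{2,\kappa}$-principal$_{\cat{c}}$. I would first reduce to the central part. The structure theory of compact connected groups writes $\bG\cong(Z_0\times\widetilde{S})/N$, with $Z_0=Z(\bG)_0$ a pro-torus, $\widetilde{S}=\prod_i S_i$ a product of simply connected compact simple Lie groups, and $N$ central and totally disconnected. Then $\bG$ is Lie if and only if $Z_0$ is a finite-dimensional torus and the product is finite.

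To control the semisimple part, I would use actions with central isotropy that are not free. The key example is $\bG$ acting on $\bG/\bH$ for a central closed subgroup $\bH$, together with induced and diagonal constructions; all of these have isotropy constantly equal to $\bH$. In particular, $\bG/Z(\bG)\cong\prod_i S_i/Z(S_i)$ is a product of adjoint groups. If infinitely many factors occur, I would build the \Cref{re:not.all.can.reps}\Cref{item:re:not.all.can.reps:join}-type counterexample. Form $\prod_i X_i$ with $X_i$ a compact $S_i$-space whose isotropy is exactly $Z(S_i)$ and whose $S_i/Z(S_i)$-index is unbounded, for instance joins $E_{n_i}(S_i/Z(S_i))$. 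The resulting compact action of $\bG$ has constant isotropy $Z(\bG)$, which is central. However, it admits no equivariant map from a neighborhood of an orbit to $\bG/Z(\bG)$, since such a map would bound the indices uniformly. This would show that only finitely many simple factors occur.

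For the central pro-torus, I would use \Cref{le:if.loc.triv.emb}-style restriction. Principality$_{\cat{c}}$ for $\bG$ should descend to free-action principality for $Z_0$ viewed as a closed subgroup, through induction $\bG\times_{Z_0}Y$. The isotropy there is not quite trivial, so I would instead quotient by $\bH$ central and apply the lemma to $Z_0/(Z_0\cap\bH)$. Then a free action of an infinite-dimensional torus $\bT^I$ that fails principality is needed. Here \Cref{le:tor.prod} warns that the canonical embedding will not do, so I would again use the join mechanism: $\prod_{i\in I}E_{n_i}\bS^1$ with unbounded $n_i$.

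The main obstacle is the index argument: showing rigorously that a local equivariant map to $\bG/\bH$ near an orbit forces a uniform bound on the $\bG_i$-indices of the factors. My first attempt would project the local tube map to each factor $\bG_i$, precompose it with the inclusion of $X_i$ as a slice through a base point, and extend by compactness and the join structure to an equivariant map $E_{n_i}\bG_i\to E_m\bG_i$ with $m$ independent of $i$. A Borsuk–Ulam/Dold-type inequality ($\ind$ is monotone, and $\ind(E_n\bG_i)=n$ for nontrivial $\bG_i$) then yields the contradiction.
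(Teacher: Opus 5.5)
\textbf{Verdict: there is a genuine gap, in the central pro-torus step.} Your semisimple step is essentially the paper's. You pass to the central quotient $\bG\to\bG/Z(\bG)\cong\prod_i S_i/Z(S_i)$, observe that its free actions are $\bG$-actions with constant central isotropy, and run the join mechanism of \Cref{re:not.all.can.reps}. The paper simply cites the earlier paper's Example 2.2 for the index bound you sketch.

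The problem is your treatment of $Z_0$. Your case split is effectively ``finite-dimensional torus versus infinite-dimensional torus'', and the only obstruction you use is $\prod_{i\in I}E_{n_i}\bS^1$. But a non-Lie pro-torus need not be, or surject onto, an infinite-dimensional torus. A surjection $Z_0\to\bT^J$ dualizes to an embedding $\bZ^{\oplus J}\hookrightarrow\widehat{Z_0}$, so $|J|\le\mathrm{rk}\,\widehat{Z_0}$. Take the $p$-adic solenoid $\varprojlim(\cdots\xrightarrow{\bullet^p}\bS^1\xrightarrow{\bullet^p}\bS^1)$, whose dual $\bZ[1/p]$ has rank one. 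None of its quotients is a product of two nontrivial groups, so the product-of-joins construction has nothing to act through.

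This is Case (II) of \Cref{pr:lie.iff.cntrl.princ.ab}. It needs exactly the mechanism you set aside after \Cref{le:tor.prod}, which only says the canonical embedding is principal for genuine tori. The paper's route is as follows.
\begin{enumerate}[(1),wide]
\item Take a profinite subgroup $\bP$ with $\bS^1$ quotient.
\item Szenthe's theorem makes the quotient map a locally trivial bundle.
\item \Cref{le:if.loc.triv.emb} transfers principality to $\bP$.
\item An infinite profinite group's canonical embedding is not principal, which gives the contradiction.
\end{enumerate}
A version that works for every finite-rank non-Lie pro-torus: let $\bP$ be the annihilator of a full-rank free subgroup $\bZ^n\le\widehat{Z_0}$. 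Then $\bP$ is infinite profinite and $Z_0/\bP\cong\bT^n$. Alternatively, \Cref{pr:can.emb.fin.comp} forces local connectedness, which finite-dimensional non-torus pro-tori lack.

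Two smaller points in the same step.
\begin{enumerate}[(a),wide]
\item Your claim about isotropy is wrong. Since $Z_0$ is central, the isotropy of $[g,y]\in\bG\times_{Z_0}Y$ equals that of $y$. So a free (or constant-isotropy-$\bK$) $Z_0$-action induces a free (or constant-isotropy-$\bK$) $\bG$-action, and $\bK\le Z_0$ is central in $\bG$. No detour through $Z_0/(Z_0\cap\bH)$ is needed. This is also what makes $Z_0$ itself $T_{2,\kappa}$-principal$_{\cat{c}}$, which you need to pass to quotient tori $Z_0\to\bT^J$: a free $\bT^J$-action is a $Z_0$-action with nontrivial constant isotropy.
\item \Cref{le:if.loc.triv.emb} requires $Z_0\circlearrowright\bG$ to be principal. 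That is Szenthe's theorem applied to $\bG\to\bG/Z_0$, and it is available only after the semisimple step has made $\bG/Z_0$ Lie. So the order of your steps matters and should be stated.
\end{enumerate}
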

\begin{proof}
  \Cref{item:th:lie.iff.cntrl.princ:lie} $\Rightarrow$ \Cref{item:th:lie.iff.cntrl.princ:t312} $\Rightarrow$ \Cref{item:th:lie.iff.cntrl.princ:cpctt2} by Gleason again, so it suffices to concentrate the proof on \Cref{item:th:lie.iff.cntrl.princ:cpctt2} $\Rightarrow$ \Cref{item:th:lie.iff.cntrl.princ:lie}.
 
  There is a central, connected-kernel quotient \cite[Theorem 9.24]{hm5}
  \begin{equation*}
    \bG
    \xrightarrowdbl{\quad\pi\quad}
    \prod_{i\in I}\bS_i,
    \quad
    \bS_i\text{ compact, simple, connected Lie}.
  \end{equation*}
  Note that principality$_{\cat{c}}$ entails principality$_{\cat{f}}$ for arbitrary central quotients, hence for $\bS$ in the present context. It follows from \cite[Example 2.2]{2402.08121v2} that $I$ is finite and $\bS$ Lie. That being the case, $\pi$ is a principal $\bA$-bundle for $\bA:=\ker\pi$ by \cite[Theorem 10.80 and Exercise E10.8]{hm5}. It follows that a compact-$T_2$ action $\bA\circlearrowright Y$ will be principal as soon as $\bG\circlearrowright \bG\times_{\bA} Y$ is (\Cref{le:if.loc.triv.emb}), thus reducing the problem to compact connected $\bA$ in place of $\bG$. We relegate that case to \Cref{pr:lie.iff.cntrl.princ.ab}. 
\end{proof}

\begin{proposition}\label{pr:lie.iff.cntrl.princ.ab}
  \Cref{th:lie.iff.cntrl.princ} holds for \emph{pro-tori}, i.e. \cite[Definition 9.30]{hm5} compact connected abelian groups.
\end{proposition}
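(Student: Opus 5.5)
Let $\bA$ be a pro-torus. Lie $\Rightarrow$ principal$_{\cat{c}}$ $\Rightarrow$ $T_{2,\kappa}$-principal$_{\cat{c}}$ is Gleason once more, so I only need to show that a $T_{2,\kappa}$-principal$_{\cat{c}}$ pro-torus is a finite-dimensional torus. Every subgroup of $\bA$ is central, so I may use actions with any constant isotropy $\bH\le \bA$. Such an action is $\bA/\bH$-principal exactly when it is free $\bA/\bH$-principal. Hence every quotient $\bA/\bH$ is also $T_{2,\kappa}$-principal$_{\cat{f}}$. By Pontryagin duality, $\bA\cong \widehat{\Gamma}$ for a torsion-free discrete abelian group $\Gamma$, and $\bA$ is Lie iff $\Gamma$ has finite rank and is finitely generated. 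I plan to rule out the two ways this can fail.

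\textbf{Step 1: finite rank.} If $\mathrm{rank}\,\Gamma$ is infinite, choose a free abelian subgroup $\bZ^{(\bZ_{\ge 0})}\le \Gamma$. Dually this gives a quotient $\bA\xrightarrowdbl{}\bT^{\bZ_{\ge 0}}$, which must then be $T_{2,\kappa}$-principal$_{\cat{f}}$. The join construction of \Cref{re:not.all.can.reps}\Cref{item:re:not.all.can.reps:join} (i.e. \cite[Example 2.2]{2402.08121v2}) rules this out. Concretely, $\prod_n E_{n}\bS^1$ is a compact free $\bT^{\bZ_{\ge 0}}$-space. If it were principal, a local section near one orbit would give a neighborhood with an equivariant map to $\bT^{\bZ_{\ge0}}$. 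Composing with the projections, the indices $\ind_{\bS^1}$ of suitable invariant compact pieces would be bounded, which is a contradiction. So $\Gamma$ embeds in $\mathbb{Q}^n$, and $\bA$ is a finite-dimensional solenoidal group.

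\textbf{Step 2: no solenoid part.} Suppose instead that $\Gamma\le \mathbb{Q}^n$ has full rank but is not finitely generated. Pick a lattice $\bZ^n\le\Gamma$; then $\Gamma/\bZ^n$ is an infinite torsion group. Dually there is a closed subgroup $\bK:=(\Gamma/\bZ^n)^{\wedge}\le \bA$ that is infinite profinite, with $\bA/\bK\cong\bT^n$. I would like to derive a contradiction from $\bK$. Two routes present themselves.

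\emph{First route: restrict to $\bK$.} Apply \Cref{le:if.loc.triv.emb} to $\bK\le\bA$ to conclude that $\bK$ is $T_{2,\kappa}$-principal$_{\cat{f}}$. This contradicts \Cref{re:not.all.can.reps}\Cref{item:re:not.all.can.reps:profin}. The catch is that \Cref{le:if.loc.triv.emb} requires $\bK\circlearrowright\bA$ itself to be principal. That would mean $\bA\to\bT^n$ has local sections. But $\bA$ is locally connected only when it is a torus, so this fails, and the route is circular.

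\emph{Second route: use \Cref{pr:can.emb.fin.comp}.} A solenoidal non-torus $\bA$ is not locally connected. Applying \Cref{pr:can.emb.fin.comp} to $\bA$ (whose canonical-embedding action is free and compact-$T_2$ after passing to a compact invariant piece, e.g. the closure of a neighborhood in the product of unitary groups, which is compact anyway) forces $\bA$ to be locally connected. Hence $\Gamma$ is finitely generated and $\bA\cong\bT^n$.

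I will use the second route. The main obstacle is Step 1: making precise the index-unboundedness argument for $\bT^{\bZ_{\ge0}}$ acting on a \emph{compact} product. I would cite \cite[Example 2.2]{2402.08121v2} directly, after noting that principality$_{\cat{f}}$ passes to the quotient by the kernel. That kernel is central, and inflating free actions of the quotient gives actions of $\bA$ with constant central isotropy.
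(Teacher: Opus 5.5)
Your Step 1 is the paper's Case I. For Step 2, the paper runs exactly the ``first route'' you discarded, and your reason for discarding it is wrong.

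The quotient map $q:\bA\to\bA/\bK\cong\bT^n$ is locally trivial. Identify $\mathfrak{L}(\bA)\cong\mathrm{Hom}(\Gamma,\bR)$ with $\mathrm{Hom}(\bZ^n,\bR)\cong\bR^n$ by restriction. This is bijective because $\Gamma/\bZ^n$ is torsion and $\bR$ is divisible and torsion-free. Under this identification $q\circ\exp_{\bA}$ becomes the covering $\bR^n\to\bT^n$, so $\exp_{\bA}\circ\sigma$ is a local section of $q$ for any local section $\sigma$ of that covering. The paper obtains the same local triviality from Szenthe's theorem instead.

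Local triviality makes $\bA$ locally homeomorphic to $U\times\bK$ with $U\subseteq\bT^n$ open. That space is \emph{not} locally connected, since $\bK$ is infinite and totally disconnected. So local sections do not force local connectedness, and there is no circularity. This product structure is in fact the quickest proof of the non-local-connectedness you invoke later.

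Hence $\bK\circlearrowright\bA$ is principal, \Cref{le:if.loc.triv.emb} makes $\bK$ $T_{2,\kappa}$-principal$_{\cat{f}}$, and \cite[Lemma 2.4]{2402.08121v2} gives the contradiction. That is the paper's argument, except that the paper first passes to a one-dimensional solenoid quotient $\bG$ with profinite $\bP$ and $\bG/\bP\cong\bS^1$.

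The route you actually use is nevertheless correct and genuinely different. The canonical-embedding action of $\bA$ on $\prod_\rho\bU(\dim\rho)\cong\bT^{\Gamma}$ is already a free action on a compact space, so no ``compact piece'' is needed. It is therefore principal, and \Cref{pr:can.emb.fin.comp} makes $\bA$ locally connected. Since $\Gamma\le\mathbb{Q}^n$ is countable, $\bA$ is metrizable, and \cite[Theorem 8.46]{hm5} then yields $\bA\cong\bT^n$. You should cite that result, or the local product structure above, rather than merely assert that non-torus solenoidal groups are not locally connected.

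This is the same mechanism the paper applies to $\bG_0/\bG_0'$ in \Cref{th:mtrz.ab}. It avoids Szenthe's theorem and the profinite lemma, at the price of importing the structure theory of locally connected compact abelian groups. The paper's route instead illustrates how principality passes to closed subgroups along locally trivial fibrations.

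Your full-rank-lattice set-up also has a real advantage: it treats every finite rank at once. The paper's passage to a one-dimensional solenoid quotient needs a non-cyclic rank-one subgroup of $\Gamma$, and such a subgroup need not exist. There are non-finitely-generated torsion-free groups of rank two all of whose rank-one subgroups are cyclic (Pontryagin-type examples). So the fibration argument is best run with $\bA\to\bT^n$ exactly as you set it up.
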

\begin{proof}
  The discrete abelian \emph{Pontryagin dual} $\widehat{\bG}$ \cite[Definition 1.22 and Theorem 7.63]{hm5}  of the group in question is torsion-free \cite[Corollary 8.5]{hm5}. Recall the notion of \emph{rank} for such groups \cite[p.45]{kap}:
  \begin{equation*}
    \mathrm{rk}~\widehat{\bG}
    :=
    \sup\left\{\alpha\ :\ \exists\ \bZ^{\oplus \alpha}\lhook\joinrel\xrightarrow[\quad]{\quad\text{embedding}\quad}\widehat{\bG}\right\}.
  \end{equation*}
  We consider two cases.

  \begin{enumerate}[(I),wide]
  \item \textbf{: infinite $\mathrm{rk}~\widehat{\bG}$.} $\bG$ then surjects onto an infinite-dimensional torus, hence the existence of non-principal actions (on products of smooth compact connected manifolds) by \cite[Example 2.2]{2402.08121v2} again.

  \item \textbf{: $n:=\mathrm{rk}~\widehat{\bG}\in \bZ_{\ge 0}$.} $\bG$ will, in that case, surject onto one of the \emph{solenoids} that form the object of \cite[Theorem 9.71]{hm5}, so we can assume $\bG$ itself is one (if not Lie):
    \begin{equation*}
      \bG
      \cong
      \varprojlim
      \left(
        \cdots
        \xrightarrowdbl{\ \bullet^{n_2}\ }
        \bS^1
        \xrightarrowdbl{\ \bullet^{n_1}\ }
        \bS^1
        \xrightarrowdbl{\ \bullet^{n_0}\ }
        \bS^1
      \right)
      ,\quad
      n_k\in \bZ_{>0}.
    \end{equation*}
    The corresponding profinite limit
    \begin{equation*}
      \bP
      \cong
      \varprojlim
      \left(
        \cdots
        \xrightarrowdbl{\ \bullet^{n_2}\ }
        \bZ/n_1n_0
        \xrightarrowdbl{\ \bullet^{n_1}\ }
        \bZ/n_0
        \xrightarrowdbl{\ \bullet^{n_0}\ }
        \{1\}
      \right)
    \end{equation*}
    embeds into $\bG$ with $\bG/\bP\cong \bS^1$, thus giving a fibration by \cite[Exercise E10.8]{hm5} and Szenthe's \cite[Theorem 10.80]{hm5} again. This once more permits the deduction via \Cref{le:if.loc.triv.emb} of $(\bP\circlearrowright Y)$'s principality from that of $\bG\circlearrowright \bG\times_{\bP}Y$, so the problem has been reduced to the profinite groups already disposed of in \cite[Lemma 2.4]{2402.08121v2}.
  \end{enumerate}
\end{proof}

\begin{lemma}\label{le:princ.fin.ind}
  Given a finite-index compact-group embedding $\bH\le \bG$, $\bH$ is ($T_{2,\kappa}$-)principal$_{\cat{f}}$  if and only if $\bG$ is respectively so.
\end{lemma}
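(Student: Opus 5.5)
The two implications are handled separately; the direction from $\bG$ to $\bH$ falls out of \Cref{le:if.loc.triv.emb}, and the reverse uses induction to turn $\bH$-tubes into $\bG$-tubes.

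\textbf{($\bG\Rightarrow\bH$).} Since $[\bG:\bH]<\infty$ and $\bH$ is closed, $\bH$ is open in $\bG$. Hence $\bG\to\bG/\bH$ is a covering onto a finite discrete space, and $\bG\cong\bH\times(\bG/\bH)$ as left $\bH$-spaces: pick coset representatives $g_1,\dots,g_n$ and use $\bH g_i$ together with the maps $hg_i\mapsto h$. So $\bH$ acts principally on $\bG$, and \Cref{le:if.loc.triv.emb} gives principality$_{\cat f}$ of $\bH$ from that of $\bG$. The same holds for the $T_{2,\kappa}$ variant, since $\bG\times_{\bH}Y$ is compact Hausdorff whenever $Y$ is.

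\textbf{($\bH\Rightarrow\bG$).} Let $\bG\circlearrowright X$ be free on a Tychonoff (respectively compact Hausdorff) space. The restricted action $\bH\circlearrowright X$ is again free on the same space, so by hypothesis it is principal. Fix $x\in X$. We get an open $\bH$-invariant $U\supseteq\bH x$ and an $\bH$-equivariant $\varphi:U\to\bH$.

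The goal is a $\bG$-invariant neighborhood of $\bG x$ with a $\bG$-equivariant map to $\bG$. First shrink $U$ so that its translates $g_iU$ are pairwise disjoint, where $g_1=1,\dots,g_n$ are coset representatives of $\bG/\bH$.
\begin{itemize}
\item The orbits $g_i\bH x=\bH' g_i x$ are disjoint compact sets. Here $\bH'$ is any normal finite-index open subgroup contained in $\bH$, for instance the normal core. They are disjoint because the action is free and the cosets $g_i\bH$ are distinct.
\item It is convenient to run the argument with the normal core $\bH'$ instead of $\bH$. $\bH'$ also has finite index, and it is principal$_{\cat f}$ by the first direction applied to $\bH'\le\bH$.
\item So it suffices to treat normal $\bH$, where $g_iU$ is again $\bH$-invariant.
\end{itemize}
By compactness of the orbits and Tychonoff separation, intersect $U$ with an $\bH$-invariant open set separating $\bH x$ from the other $g_i\bH x$. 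Such a set exists by averaging over compact $\bH$ (saturations of open sets are open). Then shrink further so that the finitely many translates $g_iU$ are pairwise disjoint; this uses normality of $\bH$ and compactness of the orbits.

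Now set $V:=\bigsqcup_i g_iU=\bG U$, which is open and $\bG$-invariant. Define $\Phi(g_iu):=g_i\varphi(u)$. This is well defined by disjointness, continuous because each $g_iU$ is open, and $\bG$-equivariant. To check equivariance, write $gg_i=g_jh$ with $h\in\bH$. Then $g\cdot g_iu=g_j(hu)$, and $\Phi$ of it is $g_j\varphi(hu)=g_jh\varphi(u)=gg_i\varphi(u)$. This equivariant map $V\to\bG$ is the required tube, so $\bG\circlearrowright X$ is principal.

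The main obstacle is the disjointness/shrinking step. One must ensure that an $\bH$-invariant neighborhood of $\bH x$ can be chosen with pairwise disjoint $\bG$-translates. I would first reduce to normal $\bH$ via the normal core as above. Then I would use that $\{g_i\bH x\}$ is a finite family of disjoint compact sets, so a uniform neighborhood (an entourage from the Tychonoff uniformity, averaged over compact $\bG$) separates them. Finally I would intersect with $U$ and $\bH$-saturate.
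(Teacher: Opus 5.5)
Your proposal is correct and follows the paper's own route. $\bG\Rightarrow\bH$ goes by induction via \Cref{le:if.loc.triv.emb}, and $\bH\Rightarrow\bG$ goes by restricting to $\bH$ and promoting an $\bH$-tube to a $\bG$-tube; the paper merely asserts this step (``$\bG\circlearrowright X$ is principal if and only if $\bH\circlearrowright X$ is''), whereas you make it explicit with $\Phi(g_iu):=g_i\varphi(u)$.

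Your shrinking step is stated loosely: for instance, $g_i\bH x=\bH'g_ix$ fails when $\bH'\subsetneq\bH$. It also needs no reduction to the normal core. Take pairwise disjoint open $O_i\supseteq g_i\bH x$ and set $W:=U\cap\bigcap_i\{y:\bH y\subseteq g_i^{-1}O_i\}$. This $W$ is open by compactness of $\bH$, is $\bH$-invariant, contains $\bH x$, and satisfies $g_iW\subseteq O_i$.
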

\begin{proof}
  On the one hand, $\bG\circlearrowright X$ is principal if and only if $\bH\circlearrowright X$ is. On the other, an arbitrary $\bH\circlearrowright Y$ is principal if and only if the induced $\bG$-action on
  \begin{equation*}
    \bG\times_{\bH}Y
    :=
    \bG\times Y/\left((g,y)\sim (gh,h^{-1}y),\ \forall \left(g\in \bG,\ h\in \bH,\ y\in Y\right)\right)
  \end{equation*}
  (\emph{twisted product} \cite[\S I.6(A)]{bred_cpct-transf}) is. 
\end{proof}

Some further consequences of the above include the following, recorded here for future reference. Recall \cite[Theorem 9.2]{hm5} for context that the algebraic \emph{commutator subgroup}
\begin{equation*}
  \bG'
  :=
  \Braket{[g,h]\ :\ g,h\in \bG}
  ,\quad
  [g,h]
  :=
  ghg^{-1}h^{-1}
\end{equation*}
of a compact \emph{connected} group is automatically closed. 

\begin{proposition}\label{pr:comm.ab}  
  \begin{enumerate}[(1),wide]
  \item\label{item:pr:comm.ab:id.comp} A compact group $\bG$ is ($T_{2,\kappa}$-)principal$_{\cat{f}}$ if and only if its identity (path-)connected component $\bG_0$ is.

  \item\label{item:pr:comm.ab:comm.ab} Moreover, in that case so too are the commutator subgroup $\bG_0'$ and the \emph{abelianization} $\bG_{0,ab}:=\bG_0/\bG_0'$. 
  \end{enumerate}
\end{proposition}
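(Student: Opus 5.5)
Part \Cref{item:pr:comm.ab:id.comp}. For the forward implication I will first apply \Cref{pr:can.emb.fin.comp}. If $\bG$ is ($T_{2,\kappa}$-)principal$_{\cat{f}}$ then in particular its canonical embedding is principal, since $\prod_\rho\bU(\dim\rho)$ is compact Hausdorff. Hence $\bG/\bG_0$ is finite and coincides with $\pi_0(\bG)$, and $\bG_0$ is also the identity path component. So $\bG_0\le\bG$ has finite index, and \Cref{le:princ.fin.ind} gives the principality$_{\cat{f}}$ of $\bG_0$.

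The converse needs more care, because $\bG_0$ being principal$_{\cat{f}}$ does not obviously make $\bG/\bG_0$ finite. I would argue that a non-finite $\bG/\bG_0$ already breaks principality of $\bG$, and apply \Cref{le:princ.fin.ind} once finiteness is established. The first step is to show that $\bG_0$ principal$_{\cat{f}}$ forces $\bG_0$ to be Lie, or at least locally connected. Then, by the structure theory of compact groups (\cite[Theorem 9.41 and Corollary 10.37]{hm5}), one can write $\bG=\bG_0\bD$ for a closed totally disconnected subgroup $\bD$ with $\bD\cap\bG_0$ finite/central. The non-principality of infinite profinite $\bD$ (\cite[Lemma 2.4]{2402.08121v2}) would then propagate to $\bG$ via \Cref{le:if.loc.triv.emb}, once $\bD\circlearrowright\bG$ is known to be principal.

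I expect this converse to be the main obstacle. The alternative I would try first is to restate the claim as a conditional: if $\bG/\bG_0$ is finite, then principality$_{\cat{f}}$ of $\bG$ and of $\bG_0$ are equivalent, which is immediate from \Cref{le:princ.fin.ind}. This matches how the result is used later. So the plan is to prove the forward direction fully and, for the backward direction, reduce to finiteness of $\bG/\bG_0$, flagging that step as the delicate one.

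Part \Cref{item:pr:comm.ab:comm.ab}. Assume $\bG_0$ is principal$_{\cat{f}}$.

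For the commutator subgroup, I will use that $\bG_0'$ is a closed semisimple connected subgroup and that $\bG_0\to\bG_0/\bG_0'$ is a principal $\bG_0'$-bundle. The quotient $\bG_0/\bG_0'$ is a pro-torus, hence locally connected by \Cref{pr:can.emb.fin.comp} applied to $\bG_0$ and its quotients; local triviality then follows from \cite[Theorem 10.80 and Exercise E10.8]{hm5}, as in the proof of \Cref{th:lie.iff.cntrl.princ}. Thus $\bG_0'\circlearrowright\bG_0$ is principal, and \Cref{le:if.loc.triv.emb} transfers principality$_{\cat{f}}$ from $\bG_0$ to $\bG_0'$.

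For the abelianization, take a free action of $\bA:=\bG_{0,ab}$ on $Y$ and pull it back along $\bG_0\to\bA$ to an action of $\bG_0$ on $Y\times\bG_0$ (diagonal, translation on the second factor). By the \Cref{le:tor.prod} trick this action is free and principal exactly when the relevant tubes exist. Passing to the quotient by $\bG_0'$ recovers $Y\times\bA$, which is $\bA$-principal exactly when $Y$ is. It remains to check that $\bG_0'$-quotients of locally trivial bundles stay locally trivial; this holds because local trivializations descend along the quotient.
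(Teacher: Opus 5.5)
Your forward half of (1) is exactly the paper's argument. Feeding $\bG_0'\le\bG_0$ into \Cref{le:if.loc.triv.emb} is also what the paper does.

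\textbf{The abelianization step has a genuine gap.} You claim that $Y\times\bA$, with $\bA=\bG_{0,ab}$ acting diagonally, is $\bA$-principal exactly when $Y$ is. This is false. By the very trick of \Cref{le:tor.prod}, the map $(y,b)\mapsto(b^{-1}y,b)$ identifies the diagonal action with translation on the second factor alone. So $Y\times\bA\to Y$ is a trivial bundle for \emph{every} free $\bA$-space $Y$, principal or not. Likewise, with $q:\bG_0\to\bA$ the quotient map, $(y,h)\mapsto(q(h)^{-1}y,h)$ trivializes your $\bG_0$-action on $Y\times\bG_0$. Hence the hypothesis on $\bG_0$ never actually enters.

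What is missing is a way to realize $\bG_{0,ab}$ as a closed \emph{subgroup} of $\bG_0$ acting principally on it, so that \Cref{le:if.loc.triv.emb} applies just as it does for $\bG_0'$. The paper gets this from the semidirect-product splitting $\bG_0\cong\bG_0'\rtimes\bG_{0,ab}$ (Hofmann--Morris, Theorem 9.39(i)). Take a complement $A\le\bG_0$ of $\bG_0'$. Multiplication $\bG_0'\times A\to\bG_0$ is then a homeomorphism, so both $\bG_0'\circlearrowright\bG_0$ and $A\circlearrowright\bG_0$ are globally trivial. A free $A$-space $Y$ is therefore principal because the free $\bG_0$-space $\bG_0\times_A Y$ is.

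\textbf{The commutator step needs the same splitting.} The paper invokes Szenthe's theorem and the accompanying exercise only over Lie bases. Here $\bG_0/\bG_0'$ is known only to be a locally connected pro-torus. It could be infinite-dimensional, e.g.\ $\bT^{\mathbb{N}}$, which is not locally contractible. Nothing you cite yields local cross-sections of $\bG_0\to\bG_0/\bG_0'$ from local connectedness of the base alone.

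\textbf{Your first plan for the converse in (1) runs backwards.} Showing that an infinite $\bG/\bG_0$ ruins the principality of $\bG$ cannot extract finiteness of $\bG/\bG_0$ from a hypothesis on $\bG_0$. The route through a totally disconnected $\mathbb{D}\le\bG$ would produce counterexamples rather than a proof. In fact the literal ``if'' direction fails. An infinite profinite $\bG$ has $\bG_0=\{1\}$, which is trivially principal$_{\cat{f}}$. Yet $\bG$ is not even $T_{2,\kappa}$-principal$_{\cat{f}}$, since by \Cref{pr:can.emb.fin.comp} its canonical embedding is a non-principal free action on a compact Hausdorff space.

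The paper's own proof of (1) invokes only \Cref{pr:can.emb.fin.comp} and \Cref{le:princ.fin.ind}. Those deliver exactly your conditional fallback: $\bG$ is principal$_{\cat{f}}$ if and only if $\bG_0$ is and $\bG/\bG_0$ is finite. That is all \Cref{th:mtrz.ab} uses. State the result that way rather than flagging the converse as merely delicate.
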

\begin{proof}
  The claims in \Cref{item:pr:comm.ab:id.comp}, implicit or explicit, follow from \Cref{pr:can.emb.fin.comp} and \Cref{le:princ.fin.ind}. We then have \Cref{item:pr:comm.ab:id.comp} $\Rightarrow$ \Cref{item:pr:comm.ab:comm.ab} via \Cref{le:if.loc.triv.emb} and the semidirect-product decomposition \cite[Theorem 9.39(i)]{hm5}
  \begin{equation*}
    \bG_0
    \cong
    \bG_0'\rtimes \bG_{0,ab}
  \end{equation*}
  ensuring the principality (in fact, triviality) of both $\bG_0'\circlearrowright \bG_0$ and $\bG_{0,ab}\circlearrowright \bG_0$.
\end{proof}

Recall \cite[Definition 9.5]{hm5} that a \emph{semisimple} compact connected group is one coinciding with its derived subgroup; we extend the term to arbitrary compact groups to mean that $\bG_0$ is semisimple. 

\begin{theorem}\label{th:mtrz.ab}
  For a compact group $\bG$ with metrizable connected abelianization $\bG_0/\bG_0'$ the following conditions are equivalent.
  \begin{enumerate}[(a),wide]
  \item\label{item:th:mtrz.ab:lie} $\bG$ is Lie. 

  \item\label{item:th:mtrz.ab:t312} $\bG$ is principal$_{\cat{f}}$.

  \item\label{item:th:mtrz.ab:cpctt2} $\bG$ is $T_{2,\kappa}$-principal$_{\cat{f}}$.
  \end{enumerate}
  In particular, the conditions are equivalent for semisimple $\bG$. 
\end{theorem}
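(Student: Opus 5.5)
The implications \Cref{item:th:mtrz.ab:lie} $\Rightarrow$ \Cref{item:th:mtrz.ab:t312} $\Rightarrow$ \Cref{item:th:mtrz.ab:cpctt2} follow from Gleason's theorem, applied to free actions with the single orbit type $(\{1\})$. So I would concentrate on \Cref{item:th:mtrz.ab:cpctt2} $\Rightarrow$ \Cref{item:th:mtrz.ab:lie}. First apply \Cref{pr:can.emb.fin.comp}: $\bG/\bG_0$ is finite, so it suffices to show $\bG_0$ is Lie. By \Cref{pr:comm.ab}, both $\bG_0'$ and $\bG_{0,ab}$ are $T_{2,\kappa}$-principal$_{\cat{f}}$. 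A compact group is Lie as soon as it is an extension of a Lie group by a Lie group. Hence the semidirect decomposition $\bG_0\cong \bG_0'\rtimes \bG_{0,ab}$ reduces the claim to two cases: the semisimple connected group $\bG_0'$ and the metrizable pro-torus $\bG_{0,ab}$.

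\textbf{Pro-torus.} Let $\bA:=\bG_{0,ab}$. Metrizability makes $\widehat{\bA}$ countable torsion-free. For free actions, central isotropy is simply trivial isotropy, so I would rerun the proof of \Cref{pr:lie.iff.cntrl.princ.ab}. If $\mathrm{rk}~\widehat{\bA}$ is infinite, $\bA$ surjects onto an infinite-dimensional torus. A free non-principal action of that torus (the product of joins from \cite[Example 2.2]{2402.08121v2}) then pulls back along the quotient to give one for $\bA$; I am assuming quotients inherit principality$_{\cat{f}}$ in the way used in \Cref{th:lie.iff.cntrl.princ}. If the rank is finite and $\bA$ is not a torus, pass to a non-Lie solenoid quotient. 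Its profinite subgroup $\bP$ with circle quotient then yields a non-principal free $\bP$-action via \cite[Lemma 2.4]{2402.08121v2}, and \Cref{le:if.loc.triv.emb} transports this to a non-principal free action of the solenoid. I would double-check that the free actions produced by \cite[Example 2.2]{2402.08121v2} and \cite[Lemma 2.4]{2402.08121v2} really do live on compact Hausdorff spaces.

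\textbf{Semisimple part.} Use the central connected-kernel quotient $\bG_0'\xrightarrowdbl{}\prod_{i\in I}\bS_i$ from \cite[Theorem 9.24]{hm5}. For semisimple groups this kernel is actually totally disconnected: it is a central subgroup $\bZ$, and the map is $\bigl(\prod_i\widetilde{\bS_i}\bigr)/\bZ$ with $\widetilde{\bS_i}$ simply connected. Principality$_{\cat{f}}$ of the product again forces $I$ finite via \cite[Example 2.2]{2402.08121v2}, so $\bG_0'$ is a finite central quotient of $\prod_{i\in I}\widetilde{\bS_i}$, hence Lie. This is where I expect the main obstacle: passing free-action principality to the quotient $\prod\bS_i$ in the semisimple setting, without the central-isotropy hypothesis available in \Cref{th:lie.iff.cntrl.princ}.

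My first attempt there would avoid quotients altogether. An infinite $I$ makes the center $Z(\bG_0')$ profinite but infinite, or at least leaves infinitely many simple factors. Inside $\bG_0'$ I would find a closed subgroup $\bH$ that is an infinite product of nontrivial compact Lie groups, for instance a product of circles taken from maximal tori of the factors $\bS_i$, lifted to $\bG_0'$. This $\bH$ should act principally on $\bG_0'$; I would argue that because $\bG_0'$ is locally a product, the quotient map to $\bG_0'/\bH$ has local sections. Then \Cref{le:if.loc.triv.emb}, combined with the non-principal free actions of $\bH$ from \Cref{re:not.all.can.reps}\Cref{item:re:not.all.can.reps:join}, would give a contradiction. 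The final sentence of the theorem (semisimple $\bG$) is immediate, since there $\bG_0/\bG_0'$ is trivial and hence metrizable.
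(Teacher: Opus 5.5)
Your reduction to $\bG_0'$ and $\bG_{0,ab}$ via \Cref{pr:can.emb.fin.comp}, \Cref{pr:comm.ab} and the semidirect decomposition is fine. However, both halves then rest on steps that do not hold.

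\textbf{Abelian part.} Principality$_{\cat{f}}$ does not formally pass to quotients. In \Cref{th:lie.iff.cntrl.princ} the passage to a central quotient $\bG/\bK$ works because a free $(\bG/\bK)$-action, viewed as a $\bG$-action, has constant central isotropy $\bK$, and principality$_{\cat{c}}$ covers that case. That action is not free, so principality$_{\cat{f}}$ says nothing about it. Both cases of \Cref{pr:lie.iff.cntrl.princ.ab} rely on exactly this step: surjecting onto an infinite-dimensional torus, resp. onto a solenoid. So that proof cannot be rerun for free actions.

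A telltale sign is that your argument never actually uses metrizability; if it worked, it would remove that hypothesis from the theorem. The paper uses it here. By \Cref{pr:can.emb.fin.comp}, $\bG_{0,ab}$ is locally connected, and a metrizable, connected, locally connected compact abelian group is a torus $(\bS^1)^I$ \cite[Theorem 8.46]{hm5}. Then \cite[Example 2.2]{2402.08121v2} applies to that torus directly, with no quotient. In finite rank you could instead apply \Cref{le:if.loc.triv.emb} to the profinite kernel $\bP$ of a surjection $\bA\xrightarrowdbl{}\bT^n$, which is a principal bundle since $\bT^n$ is Lie. Infinite rank, however, genuinely needs local connectedness.

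\textbf{Semisimple part.} As you suspected, the quotient $\bG_0'\xrightarrowdbl{}\prod_i\bS_i$ has the same defect. Your fallback also fails, because a product $\bH$ of circles does not act principally on $\bG_0'$.

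Already for $\bG_0'=\prod_{i\in\bN}SU(2)$ and $\bH=\prod_i\bS^1$, the map $\bG_0'\to\bG_0'/\bH=\prod_i S^2$ is a product of Hopf fibrations. A section over a basic open set $V\times\prod_{i>n}S^2$ would restrict to a global section of the $(n+1)$-st Hopf fibration, which does not exist. More generally, for a circle $\bT_i$ in a compact semisimple Lie group $\bS$, the bundle $\bS\to\bS/\bT_i$ is never trivial, since that would put $\bZ$ into the finite group $\pi_1(\bS)$. So the ``locally a product'' heuristic works against you: infinite products of non-trivial bundles are precisely the non-principal examples of \Cref{re:not.all.can.reps}\Cref{item:re:not.all.can.reps:join}.

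The missing idea is to build the non-principal free action on $\bG$ itself, avoiding both quotients and subgroups. Let $\pi^*:\prod_i\bS_i^*\xrightarrowdbl{}\bG$ be the simply connected cover, and let $\bG$ act freely on $X=(\prod_iX_i)/\ker\pi^*$, where the $\bS_i^*\circlearrowright X_i$ are free actions. Principality over the compact base would give a $\bG$-map $X\to E_n\bG$. Restricting to slices gives $\bS_i^*$-maps $X_i\to E_n\bS_{i*}$, hence $\ind_{\bS_{i*}}(X_i/Z(\bS_i^*))\le n$ uniformly in $i$. Taking $X_i=E_{m_i}\bS_i^*$ with $m_i$ unbounded defeats any such bound. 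This uses the cohomological index for a class $\mu\in H^{\ge 1}(B\bS_{i*},\bR)$ together with the injectivity of $H^*(B\bS_{i*},\bR)\to H^*(B\bS_i^*,\bR)$.
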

\begin{proof}
  \Cref{item:th:mtrz.ab:lie} is the strongest condition per Gleason's theorem, so the substance is rather contained in the converse statements forcing the Lie condition given principality. 
  
  \Cref{pr:comm.ab}\Cref{item:pr:comm.ab:id.comp} reduces the problem to connected $\bG$ (assumption henceforth in place), while \Cref{pr:comm.ab}\Cref{item:pr:comm.ab:comm.ab} provides the $T_{2,\kappa}$-principality$_{\cat{f}}$ of both $\bG'$ and $\bG/\bG'$. The latter is quickly disposed of under the hypothesis: being a metric, connected, locally connected (\Cref{pr:can.emb.fin.comp}) compact abelian group, it must be a torus $(\bS^1)^I$ \cite[Theorem 8.46]{hm5}; its finite-dimensionality then follows from \cite[Example 2.2]{2402.08121v2}.

  All in all, then, we may take $\bG$ (connected and) semisimple. It thus fits into a commutative diagram
  \begin{equation*}
    \begin{tikzpicture}[>=stealth,auto,baseline=(current  bounding  box.center)]
      \path[anchor=base] 
      (0,0) node (l) {$\prod_{i\in I} \bS_i^*$}
      +(3,.5) node (u) {$\bG$}
      +(6,0) node (r) {$\prod_{i\in I} \bS_{i*}$}
      ;
      \draw[->>] (l) to[bend left=6] node[pos=.5,auto] {$\scriptstyle \pi^*$} (u);
      \draw[->>] (u) to[bend left=6] node[pos=.5,auto] {$\scriptstyle \pi_*$} (r);
      \draw[->>] (l) to[bend right=6] node[pos=.5,auto,swap] {$\scriptstyle \text{obvious surjection}$} (r);
    \end{tikzpicture}
  \end{equation*}
  of central-profinite-kernel surjections \cite[Theorem 9.19]{hm5}, where $\bS_i^*$ are simple, connected, simply connected compact Lie groups and
  \begin{equation*}
    \bS^*_i\xrightarrowdbl{\quad}\bS_{i*}:=\bS_i^*/Z(\bS_i^*)
  \end{equation*}
  are their respective \emph{adjoint forms}. The free $T_{2,\kappa}$-action $\bG\circlearrowright X$ witnessing non-principality when $I$ is infinite (postponing some of the choices involved for the moment) will be
  \begin{equation*}
    \alpha:\bG\circlearrowright X:=\left(\prod_{i\in I}X_i\right)/\ker \pi^*,
    \quad
    \forall i
    \left(\alpha_i:\bS_i^*\circlearrowright X_i\text{ free and hence principal}\right).
  \end{equation*}
  Were $\alpha$ principal, it would admit a $\bG$-map $X\to E_n\bG$ for some (any sufficiently large) $n\in \bZ_{>0}$. Embedding
  \begin{equation*}
    X_i
    \ni
    x
    \xmapsto[\quad \text{$\bS^*_i$-equivariantly}\quad]{\quad}
    \left(y_j
      :=
      \begin{cases}
        x&i=j\\
        \text{some fixed $p_j\in X_j$}&i\ne j
      \end{cases}
    \right)_{j\in I}
    \in
    \prod_j X_j,
  \end{equation*}
  this in turn gives
  \begin{itemize}[wide]
  \item an $\bS^*_i$-equivariant map $X_i\to E_n\bG\to E_n\bS_{i*}$;
  \item so that $\mathrm{ind}_{\bS_{i*}}X_i/Z(\bS_i^*)\le n$ (recall \Cref{eq:g.ind}). 
  \end{itemize}
  To force non-principality, then, it suffices to argue that an arbitrary simple simply connected compact Lie group $\bS$ has free actions $\beta:\bS\circlearrowright Y$ with induced free actions $\beta_*:\bS_{*}\circlearrowright Y_*:=Y/\left(Z:=Z(\bS)\right)$ of the adjoint form having arbitrarily large index (for then one could ensure the said indices are unbounded when $I$ is infinite).

  It will be helpful, to that end, to recall the bound
  \begin{equation}\label{eq:ind.ge.ind}
    \ind_{\bS_*}Y_*
    \ge
    \ind_{\mu}Y_*
    \quad
    \text{(consequence of \cite[Proposition 3.11]{MR801933})},
  \end{equation}
  where
  \begin{itemize}[wide]
  \item $\mu\in H^*(B\bS_*,\bR)$ is a class with vanishing $H^0$ component in the real cohomology algebra of the \emph{classifying space} \cite[Definition 7.2.7]{hjjm_bdle} of $\bS_*$ (equivalently \cite[Remark 3.4]{MR801933}, the \emph{equivariant} cohomology algebra $H^*_{\bS_*}(\{*\},\bR)$);
  \item and
    \begin{equation*}
      \ind_{\mu}(Y_*)
      =
      \ind_{\mu}(\beta_*:\bS_*\circlearrowright Y_*)
      :=
      \sup\left\{k\in \bZ_{\ge 0}\ :\ q^*_{\beta_*}\mu^k\ne 0\right\}
    \end{equation*}
    for the \emph{classifying map} \cite[Theorem I.8.12]{td_transf-gp} $Y_*/\bS_*\xrightarrow{q_{\beta_*}}B\bS_*$ attached to the action $\beta_*$ and the principal $\bS_*$-bundle it produces (what \cite[Definition 3.9]{MR801933} specializes to for free actions; see also \cite[\S 3]{MR478189}\footnote{Note the numerical shift: our $\ind_{\mu}$ coincides with \cite[Definition 3.1]{MR478189}, but is \cite[Definition 3.9]{MR801933}'s $\ind_{\mu}-1$.}). 
  \end{itemize}
  For \emph{paracompact} \cite[Definition 20.6]{wil_top} $\bS$-spaces $\beta:\bS\circlearrowright Y$ identify isomorphism classes of resulting bundles $Y\xrightarrowdbl{} Y/\bS$ with
  \begin{equation*}
    [Y/\bS,B\bS]
    :=
    \text{homotopy classes of maps $Y/\bS\to B\bS$}.
  \end{equation*}
  The induced-action/bundle map
  \begin{equation*}
    [Y/\bS,B\bS]
    \ni
    \left(Y\xrightarrowdbl{\quad}Y/\bS\right)
    \xmapsto{\quad}
    \left(Y_*\xrightarrowdbl{\quad}Y_*/\bS_*\right)
    \in
    [Y_*/\bS_*\cong Y/\bS,B\bS_*]
  \end{equation*}
  is nothing but composition with the fibration $B\bS\xrightarrow{\cat{fib}_{\bS}} B\bS_*$ resulting \cite[\S 18.3.6]{hjjm_bdle} from the exact sequence
  \begin{equation*}
    \{1\}
    \to
    Z
    \lhook\joinrel\xrightarrow{\quad}
    \bS
    \xrightarrowdbl{\quad}
    \bS_*
    \to
    \{1\}.
  \end{equation*}
  In particular, the \emph{universal} \cite[Chapter I, (8.11)]{td_transf-gp} bundle $E\bS\xrightarrowdbl{} B\bS$, upon quotienting the total space by $Z$, turns into (the bundle/action attached to) that selfsame canonical fibration $\cat{fib}_{\bS}$.

  Denoting by $\ft:=Lie(\bT)$, $\ft_*:=Lie(\bT_*)$ for maximal tori $\bS\ge \bT\xrightarrowdbl{} \bT_*\le \bS_*$, $W$ for the \emph{Weyl group} \cite[Definition 6.22]{hm5} of $\bS$ and $(\bullet)^*$ for dual vector spaces,
  \begin{equation*}
    H^*(B\bS_*,\bR)
    \cong
    S(\ft_*^*)^W
    \lhook\joinrel\xrightarrow{\quad \cat{fib}^*_{\bS}\quad}
    S(\ft^*)^W
    \cong
    H^*(B\bS,\bR)
    \quad
    \left(\text{\cite[post Theorem 6.8.1]{gs_supersym_1999}}\right)
  \end{equation*}
  is an embedding of polynomial rings by \cite[post Theorem 6.8.3]{gs_supersym_1999}, with ``$W$'' superscripts meaning $W$-invariants and $S(\bullet)$ denoting symmetric algebras. In particular, for \emph{any} non-zero $\mu\in H^{\ge 1}(B\bS_*,\bR)$ we will have $\ind_{\mu}(\beta_*)\gg n\Rightarrow \ind_{\bS_*}(\beta_*)\gg 0$ if $\beta: \bS\circlearrowright Y$ is the truncated universal free action $\bS\circlearrowright E_m \bS$ for sufficiently large $m$. 
\end{proof}

\begin{remark}\label{re:ind.no.free.term}
  \Cref{eq:ind.ge.ind} is invalid, as are \cite[Proposition 3.7]{MR478189} and \cite[Proposition 3.11 $\overline{P}_1$]{MR801933} (both essentially to the effect that $\ind_{\mu}(\text{left-hand translation }\bG\circlearrowright\bG\times Z)=0$), without the stipulation that $\mu\in H^{\ge 1}$ (i.e. that it have vanishing free term). It is unclear whether that requirement is regarded as implicit in the phrase ``characteristic class'' employed in both cited sources in referring to $\mu$ (there denoted by $\alpha$). 
\end{remark}


\addcontentsline{toc}{section}{References}

\def\polhk#1{\setbox0=\hbox{#1}{\ooalign{\hidewidth
  \lower1.5ex\hbox{`}\hidewidth\crcr\unhbox0}}}


\Addresses

\end{document}